\documentclass[11pt]{article}

\usepackage[T1]{fontenc}
\usepackage[utf8]{inputenc}
\usepackage{lmodern}
\usepackage{microtype}
\usepackage{geometry}
\usepackage{amsmath,amsthm,amssymb,mathtools}
\usepackage{hyperref}

\hypersetup{
  colorlinks=true,
  linkcolor=blue,
  citecolor=blue,
  urlcolor=blue
}

\newtheorem{theorem}{Theorem}
\newtheorem{proposition}[theorem]{Proposition}
\newtheorem{lemma}[theorem]{Lemma}
\newtheorem{corollary}[theorem]{Corollary}
\theoremstyle{definition}

\newcommand{\Ann}{\operatorname{Ann}}
\newcommand{\Max}{\operatorname{Max}}
\newcommand{\TotQ}[1]{Q(#1)}

\title{An Integrally Closed Reduced Ring with McCoy Localizations That Is Neither McCoy nor Locally a Domain}
\author{HAOTIAN MA\\Zhejiang University}
\date{}

\begin{document}

\maketitle

\begin{abstract}
We construct an explicit commutative ring $R$ that is reduced and integrally
closed, such that $R_{\mathfrak p}$ is an integrally closed McCoy ring for
every maximal ideal $\mathfrak p$ of $R$, while $R$ itself is not a McCoy ring
and is not locally a domain. This gives an affirmative answer to Problem~9 in
\emph{Open Problems in Commutative Ring Theory}. The construction combines
Akiba's Nagata-type example, which already yields an integrally closed reduced
ring with integrally closed domain localizations and a finitely generated ideal
of zero-divisors with zero annihilator, with an explicit local integrally
closed McCoy ring that is not a domain. Taking the direct product of these two
rings preserves the required local McCoy property while retaining the global
failure of the McCoy condition. As a consequence, $R[X]$ is integrally closed
by Huckaba's criterion.
The proof presented in this note was completed by Rethlas \cite{Rethlas2604},
a natural-language automated reasoning system; the author was
responsible for reviewing and checking the argument.
\end{abstract}

\section{Introduction}

A commutative ring $R$ is called a \emph{McCoy ring} if every finitely
generated ideal $I\subseteq Z(R)$ has a nonzero annihilator. In his 1980
paper, Akiba proved that if $R$ is an integrally closed reduced McCoy ring,
then the polynomial ring $R[X]$ is integrally closed \cite[Theorem~3.2]{Akiba}.
He also proved that if $R_M$ is an integrally closed domain for every maximal
ideal $M$ of $R$, then $R[X]$ is integrally closed
\cite[Corollary~1.3]{Akiba}. Huckaba observed that these results imply that
if $R$ is reduced and $R_M$ is an integrally closed McCoy ring for every
maximal ideal $M$, then $R[X]$ is integrally closed \cite[p.~103]{Huckaba}.

Problem~9 in \emph{Open Problems in Commutative Ring Theory} asks whether
there exists an integrally closed reduced ring $R$ such that every maximal
localization $R_M$ is an integrally closed McCoy ring, but $R$ itself is not
a McCoy ring and is not locally a domain. The purpose of this note is to give
an explicit construction of such a ring.

The argument has three ingredients.
First, we use Akiba's Nagata-type example, which already provides an
integrally closed reduced ring $A$ with localizations $A_M$ all integrally
closed domains, but with a finitely generated ideal inside $Z(A)$ having zero
annihilator. Second, we construct an elementary local McCoy ring $B$ which is
integrally closed but not a domain. Finally, we take the direct product
$R=A\times B$ and verify that this simultaneously preserves the local McCoy
property and the failure of the global McCoy property.

\section{The Akiba Factor}

We begin with the factor supplied by Akiba's example.

\begin{proposition}\label{prop:akiba}
Let $k$ be a field. Let $P$ be a set of representatives of the irreducible
polynomials of $k[X,Y]$ modulo associates. For each $f\in P$, set
\[
T_f:=k[X,Y]/(f),
\]
let $T_f^\prime$ be the derived normal ring of $T_f$ in the sense of Akiba,
and let $T_{f,n}^\prime$ be a copy of $T_f^\prime$ for each $n\in\mathbb N$.
Define
\[
S_A:=\prod_{(f,n)\in P\times \mathbb N} T_{f,n}^\prime,
\qquad
S_{A,0}:=\bigoplus_{(f,n)\in P\times \mathbb N} T_{f,n}^\prime,
\qquad
R_{A,0}:=k+S_{A,0}\subseteq S_A.
\]
Let $x_f,y_f$ denote the residue classes of $X,Y$ in $T_f$ and their images
in every copy $T_{f,n}^\prime$. Define $u,v\in S_A$ by
\[
u(f,n)=x_f,
\qquad
v(f,n)=y_f
\qquad
\text{for all }(f,n)\in P\times\mathbb N,
\]
and set
\[
A:=R_{A,0}[u,v].
\]
Let
\[
I_A:=(u,v)\subseteq A.
\]
Then:
\begin{enumerate}
\item $A$ is reduced and integrally closed.
\item For every maximal ideal $M$ of $A$, the localization $A_M$ is an
integrally closed domain.
\item $I_A$ is a finitely generated ideal contained in $Z(A)$ and
$\Ann_A(I_A)=0$.
\item In particular, $A$ is not a McCoy ring.
\end{enumerate}
\end{proposition}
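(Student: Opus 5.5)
This proposition is essentially Akiba's example, so the plan is to reduce each item either to a citation of Akiba's paper or to a short direct verification inside $S_A$. The only inputs are three facts: every $T_{f,n}'$ is an integrally closed domain; $S_A$ is reduced; and $S_{A,0}$ is an ideal of $S_A$ that $A$ contains.

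\emph{Item (1).} Reducedness is immediate, because $A\subseteq S_A$ and a product of domains is reduced. For integral closedness, the first step is to locate the total quotient ring $Q(A)$. Every element of $S_{A,0}$ has finite support, and $A$ contains all the idempotents $e_{(f,n)}$ of $S_{A,0}$. An element of $A$ is therefore regular only if it is nonzero in every coordinate, and a fraction $a/s$ with $a,s\in A$ can be computed coordinatewise. Using this, I would identify $Q(A)$ with a subring of $\prod Q(T_{f,n}')$. Now let $z\in Q(A)$ be integral over $A$. Multiplying by $e_{(f,n)}$ shows that each coordinate of $z$ is integral over $T_{f,n}'$, hence lies in $T_{f,n}'$. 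It remains to show that $z$ itself lies in $A$. I expect this to follow Akiba's argument: write $z=a/s$ and use the fact that $u,v$ are "generic" in the sense that each irreducible $f$ appears infinitely often. Then $s$, being regular, is $\lambda+g(u,v)$ with $g$ a polynomial, up to a finite-support correction, and the pattern of $a$ forces $z\in k[u,v]+S_{A,0}$. This step is the main obstacle, and at this point I would cite Akiba's Example and Proposition directly rather than reprove it.

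\emph{Item (2).} Let $M$ be a maximal ideal of $A$. If $M\not\supseteq S_{A,0}$, then some idempotent $e=e_{(f,n)}\notin M$. In that case $A_M=(eA)_{M}$, and since $eA=T_{f,n}'$, the localization $A_M$ is a localization of $T_{f,n}'$, hence an integrally closed domain. If instead $M\supseteq S_{A,0}$, then every idempotent $e_{(f,n)}$ becomes zero in $A_M$, because $e(1-e)=0$ and $1-e\notin M$. Consequently $S_{A,0}A_M=0$ and $A_M=(A/S_{A,0})_{\bar M}$. Next, $A/S_{A,0}\cong k[u,v]$ modulo $k[u,v]\cap S_{A,0}$. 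A nonzero polynomial $g$ vanishes at only finitely many coordinates, namely those with $f\mid g$ (one $f$ up to association, infinitely many $n$). So the intersection is $0$, and $A/S_{A,0}\cong k[X,Y]$, whose localizations are integrally closed domains.

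\emph{Items (3) and (4).} The ideal $I_A$ is finitely generated by definition. To show $I_A\subseteq Z(A)$, take $a=\alpha u+\beta v\in I_A$ and write $\alpha=c+g(u,v)+s$ with $s\in S_{A,0}$, and similarly for $\beta$. Then the polynomial part $h=cX+dY+\dots\in(X,Y)$ is a non-unit of $k[X,Y]$. If $h\neq 0$, choose an irreducible factor $f$ of $h$. If $h=0$, let $f$ be any polynomial. In either case, choose $n$ outside the finite support of the $S_{A,0}$-parts. Then $a$ vanishes at the coordinate $(f,n)$, so $e_{(f,n)}a=0$ and $a$ is a zero-divisor. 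For the annihilator, suppose $bu=bv=0$. At coordinate $(f,n)$ this gives $b(f,n)x_f=b(f,n)y_f=0$. Since $x_f,y_f$ are not both zero in the domain $T_f'$ (because $f$ is irreducible, $(f)\neq(X,Y)$ unless $f$ is associate to a linear form, and we need both $X$ and $Y$ to vanish), we get $b(f,n)=0$ for every coordinate, so $b=0$. Item (4) then follows at once from (3) and the definition of a McCoy ring.
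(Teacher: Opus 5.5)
Your proposal is correct, but it takes a different route from the paper's proof.

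\textbf{Comparison with the paper.} The paper proves nothing here directly. It identifies $A$ with Akiba's Nagata example and cites Akiba for every item: quasi-normality, $I_A\subseteq Z(A)$, $\Ann_A(I_A)=0$, reducedness, and the fact that each $A_M$ is an integrally closed domain. You cite Akiba only for integral closedness and verify the rest from the construction:
\begin{enumerate}
\item For (2), you split $\Max(A)$ according to whether $M\supseteq S_{A,0}$, using the idempotents $e_{(f,n)}$. This makes $A_M$ a localization either of the normal domain $T'_{f,n}$ or of $A/S_{A,0}\cong k[X,Y]$.
\item For (3), you use a coordinate-vanishing argument.
\end{enumerate}
What this buys is independence from the exact content of Akiba's text for the two properties the main theorem actually uses: the local structure and the non-McCoy ideal.

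\textbf{Removing the last citation.} Your sketch for (1) can be finished just as easily, so no citation is needed at all. Let $s=g(u,v)+s_0$ be regular, with $g\in k[X,Y]$ and $s_0\in S_{A,0}$.
\begin{enumerate}
\item For each $f\in P$, infinitely many coordinates $(f,n)$ lie outside the support of $s_0$. Regularity therefore gives $f\nmid g$ for every $f\in P$, which forces $g=\lambda\in k^{\times}$.
\item From $zs=a$ and $z\in S_A$ (your coordinatewise step), you get $z=\lambda^{-1}(a-zs_0)\in A$, because $S_{A,0}$ is an ideal of $S_A$.
\end{enumerate}
No ``pattern of $a$'' is needed.

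\textbf{Minor slips.}
\begin{enumerate}
\item In (2), a nonconstant $g$ does not vanish at only finitely many coordinates. It vanishes at every $(f,n)$ with $f\mid g$, which means finitely many $f$ but all $n$. What you need is the reverse: $g(u,v)$ is nonzero at the infinitely many coordinates $(f,n)$ with $f\nmid g$. These exist because $P$ is infinite, so $g(u,v)\notin S_{A,0}$.
\item In (3), $x_f$ and $y_f$ are not both zero simply because an irreducible $f$ cannot divide both $X$ and $Y$. The remark about $(f)\neq(X,Y)$ and linear forms is beside the point, since $(X,Y)$ is never principal.
\item ``An irreducible factor $f$'' should read ``the representative in $P$ of an irreducible factor'', and ``any polynomial $f$'' should read ``any $f\in P$''.
\end{enumerate}
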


\begin{proof}
Akiba's Nagata example is exactly the ring $A=R_{A,0}[u,v]$ above
\cite[Example (Nagata)]{Akiba}. Akiba proves that $A$ is quasi-normal, that
is, integrally closed in its total quotient ring, and that for the ideal
$I_A=(u,v)$ every nonzero element of $I_A$ is a zero-divisor and
\[
\Ann_A(I_A)=0.
\]
Therefore $A$ is integrally closed, $I_A\subseteq Z(A)$, and $I_A$ is
generated by $u$ and $v$. This proves part~(3).

Akiba also states in the same example that this ring is reduced and that
every localization $A_M$ at a maximal ideal is an integrally closed domain.
Hence parts~(1) and~(2) hold.

Finally, part~(3) exhibits a finitely generated ideal contained in $Z(A)$ with
zero annihilator, so $A$ fails the defining McCoy condition. This proves~(4).
\end{proof}

\section{A Local McCoy Factor}

We now construct a local integrally closed McCoy ring which is not a domain.

\begin{lemma}\label{lem:localfactor}
Let $k$ be a field, and for each $i\ge 1$ let
\[
S_i:=k[[t]],
\qquad
\mathfrak m_i:=t\,k[[t]].
\]
Set
\[
\mathfrak m_B:=\bigoplus_{i\ge 1}\mathfrak m_i
\subseteq
\prod_{i\ge 1} S_i,
\qquad
B:=k+\mathfrak m_B.
\]
Then:
\begin{enumerate}
\item $B$ is a reduced local ring with maximal ideal $\mathfrak m_B$.
\item Every element of $\mathfrak m_B$ is a zero-divisor. In particular,
$Z(B)=\mathfrak m_B$.
\item Every non-zero-divisor of $B$ is a unit. Consequently $\TotQ{B}=B$, so
$B$ is integrally closed.
\item $B$ is a McCoy ring.
\item $B$ is not a domain.
\end{enumerate}
\end{lemma}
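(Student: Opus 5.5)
We argue directly from the description $B=k+\mathfrak m_B$. Every element is uniquely $b=c+m$ with $c\in k$ and $m=(m_i)\in\mathfrak m_B$, where only finitely many $m_i$ are nonzero. Uniqueness holds because $\mathfrak m_B\cap k=0$: a nonzero constant has all coordinates nonzero, which is impossible for an element of the direct sum. First we check that $B$ is a subring. The set $\mathfrak m_B$ is an ideal of $\prod S_i$ restricted to $B$, since $(c+m)m'=cm'+mm'$ again has finite support with coordinates in $t\,k[[t]]$. Reducedness is inherited from the reduced ring $\prod S_i$.

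For locality in part (1), we show that every $c+m$ with $c\neq 0$ is a unit of $B$. In each coordinate $c+m_i$ is a unit of $k[[t]]$. Its inverse is $c^{-1}$ for $i$ outside the finite support of $m$, and $(c+m_i)^{-1}=c^{-1}+n_i$ with $n_i\in\mathfrak m_i$ for $i$ in the support. Hence the inverse equals $c^{-1}+n$ with $n\in\mathfrak m_B$, so it lies in $B$. Consequently $\mathfrak m_B$, which is the kernel of the surjection $B\to k$, is the unique maximal ideal.

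For part (2), take $m\in\mathfrak m_B$ and choose an index $j$ outside its finite support. Then $e:=t\cdot\delta_j$ (the element $t$ in coordinate $j$ and $0$ elsewhere) is a nonzero element of $\mathfrak m_B$ with $me=0$. Together with the fact that units are not zero-divisors, this gives $Z(B)=\mathfrak m_B$. Part (3) follows immediately: a non-zero-divisor lies outside $\mathfrak m_B$ and is therefore a unit. So $Q(B)=B$, and $B$ is trivially integrally closed in its total quotient ring.

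Part (4) is the only step that needs a new idea, and it extends the trick of part (2). Let $I=(b_1,\dots,b_n)\subseteq Z(B)=\mathfrak m_B$. The union of the supports of the $b_\ell$ is finite, so we choose an index $j$ outside it. Then $t\delta_j$ annihilates every $b_\ell$, hence annihilates $I$, and is nonzero. So $\Ann_B(I)\neq 0$ and $B$ is McCoy. The essential point is that finite generation lets us pick a single coordinate avoiding all the generators. For part (5), $t\delta_1\cdot t\delta_2=0$ with both factors nonzero, so $B$ is not a domain. The only thing to double-check is the inverse computation in part (1), which guarantees that the inverse stays inside $k+\mathfrak m_B$.
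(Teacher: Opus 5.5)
Your proof is correct and follows the paper's argument essentially step by step: the coordinatewise inverse $c^{-1}+n$ with $n\in\mathfrak m_B$ gives locality, a coordinate $j$ outside the finite (union of) supports yields the nonzero annihilator $t\delta_j$ for parts (2) and (4), and $Q(B)=B$ gives part (3). Your extra checks that $B$ is a subring and that $k\cap\mathfrak m_B=0$ fill in details the paper leaves implicit, and your part (5) via $t\delta_1\cdot t\delta_2=0$ is only a cosmetic variant of the paper's argument.
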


\begin{proof}
Since $B$ is a subring of the product $\prod_{i\ge 1} S_i$ of domains, the
ring $B$ is reduced.

Every element of $B$ has the form $c+e$ with $c\in k$ and
$e=(e_i)_i\in \mathfrak m_B$, where $e_i=0$ for all but finitely many $i$.
Modulo $\mathfrak m_B$, only the scalar part survives, so
\[
B/\mathfrak m_B\cong k.
\]
Thus $\mathfrak m_B$ is maximal.

Let $c+e\in B$ with $c\neq 0$. Write
\[
F:=\{i\in \mathbb N : e_i\neq 0\},
\]
which is finite. For each $i\in F$, the element
\[
d_i:=(c+e_i)^{-1}-c^{-1}
\]
lies in $\mathfrak m_i$, because $e_i\in \mathfrak m_i$ and $c$ is a unit in
the discrete valuation ring $S_i$. Let $d=(d_i)_i$, extended by $0$ outside
$F$. Then $d\in \mathfrak m_B$, and one checks inside the product ring that
\[
(c+e)^{-1}=c^{-1}+d\in k+\mathfrak m_B=B.
\]
Hence every element of $B\setminus \mathfrak m_B$ is a unit, and therefore
$B$ is local with maximal ideal $\mathfrak m_B$. This proves~(1).

Let $x=(x_i)_i\in \mathfrak m_B$. Its support is finite, so choose
$j\in\mathbb N$ outside that support and choose $0\neq y\in \mathfrak m_j$.
Let $e^{(j)}\in \mathfrak m_B$ be the element whose $j$-th coordinate is $y$
and whose other coordinates are $0$. Then $e^{(j)}\neq 0$ and
\[
xe^{(j)}=0.
\]
Thus every element of $\mathfrak m_B$ is a zero-divisor. Since units are
never zero-divisors, $Z(B)=\mathfrak m_B$. This proves~(2).

By part~(2), every non-zero-divisor of $B$ lies outside $\mathfrak m_B$, hence
is a unit by part~(1). Therefore localizing at all non-zero-divisors does not
change the ring:
\[
\TotQ{B}=B.
\]
Thus $B$ is integrally closed in its total quotient ring. This proves~(3).

Let $J=(z_1,\dots,z_r)$ be a finitely generated ideal of $B$ contained in
$Z(B)=\mathfrak m_B$. Each generator $z_\nu$ has finite support, so the union
of all supports is finite. Choose $j$ outside that finite union and choose
$0\neq y\in \mathfrak m_j$. Let $e^{(j)}\in \mathfrak m_B$ be defined as
above. Then
\[
e^{(j)}z_\nu=0
\qquad
(\nu=1,\dots,r).
\]
Hence $0\neq e^{(j)}\in \Ann_B(J)$. Therefore $B$ is a McCoy ring. This
proves~(4).

Finally, $\mathfrak m_B\neq 0$, and every nonzero element of $\mathfrak m_B$
is a zero-divisor by part~(2). Hence $B$ is not a domain. This proves~(5).
\end{proof}

\section{Direct Products}

The final argument uses only standard properties of direct products, but we
record them for convenience.

\begin{lemma}\label{lem:product}
Let $A$ and $B$ be commutative rings.
\begin{enumerate}
\item One has
\[
\TotQ{A\times B}\cong \TotQ{A}\times \TotQ{B}.
\]
Consequently, if $A$ and $B$ are reduced and integrally closed, then
$A\times B$ is reduced and integrally closed.
\item The maximal ideals of $A\times B$ are exactly the ideals of the form
$M\times B$ with $M\in \Max(A)$ and the ideals of the form $A\times N$ with
$N\in \Max(B)$. Moreover,
\[
(A\times B)_{M\times B}\cong A_M,
\qquad
(A\times B)_{A\times N}\cong B_N.
\]
\item Suppose $I=(a_1,\dots,a_r)$ is a finitely generated ideal of $A$ such
that $I\subseteq Z(A)$ and $\Ann_A(I)=0$. Then
\[
J:=I\times B
\]
is a finitely generated ideal of $A\times B$ contained in $Z(A\times B)$ and
satisfies $\Ann_{A\times B}(J)=0$.
\end{enumerate}
\end{lemma}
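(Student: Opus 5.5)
The plan is to work coordinatewise throughout, using the idempotents $(1,0)$ and $(0,1)$ of $A\times B$ to split every question into an $A$-part and a $B$-part.

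For part~(1), the first step is to identify the regular elements. An element $(a,b)$ is a non-zero-divisor of $A\times B$ exactly when $a$ is a non-zero-divisor of $A$ and $b$ is a non-zero-divisor of $B$. Indeed, if $a c=0$ with $c\neq 0$, then $(a,b)(c,0)=0$; the converse is checked componentwise. So the multiplicative set of regular elements is $S_A\times S_B$. Localization then commutes with the product, giving
\[
(S_A\times S_B)^{-1}(A\times B)\cong S_A^{-1}A\times S_B^{-1}B .
\]
Reducedness is clear coordinatewise. For integral closedness, take $z=(z_1,z_2)\in \TotQ{A}\times\TotQ{B}$ satisfying a monic equation over $A\times B$. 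Projecting gives monic equations for $z_1$ over $A$ and for $z_2$ over $B$. Hence $z_1\in A$ and $z_2\in B$, so $z\in A\times B$.

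For part~(2), take any ideal $\mathfrak P$ of $A\times B$. Since $\mathfrak P$ is closed under multiplication by $(1,0)$ and $(0,1)$, it has the form $I_1\times I_2$. The quotient is $A/I_1\times B/I_2$, and this is a field only if one factor is zero and the other is a field. That gives the stated list of maximal ideals. For the localizations, consider $M\times B$. The element $(0,1)$ lies outside $M\times B$ and becomes a unit after localizing. Since $(1,0)(0,1)=0$, it kills the $B$-component. Formally, localizing first at the element $(1,0)$ gives $(A\times B)_{(1,0)}\cong A$, and the remaining localization is at the image of the complement, which is $A\setminus M$. This yields $A_M$, and the case $A\times N$ is symmetric.

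For part~(3), $J=I\times B$ is generated by $(a_1,0),\dots,(a_r,0),(0,1)$, so it is finitely generated. For the annihilator, suppose $(c,d)$ kills $J$. Killing $(0,1)$ forces $d=0$, and killing $I\times 0$ forces $c\in\Ann_A(I)=0$.

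The one real subtlety is $J\subseteq Z(A\times B)$. A typical element $(a,b)$ of $J$ has $a\in I$, so $a\in Z(A)$ and there is $c\neq0$ with $ac=0$. Then $(a,b)(c,0)=0$ with $(c,0)\neq0$, so $(a,b)$ is a zero-divisor. This works even when $b$ is a unit. I expect this to be the main point to check carefully: $Z(A\times B)$ is not closed under addition, but only membership of each element is required, so the argument above suffices.
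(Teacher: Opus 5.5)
Your proposal is correct and follows essentially the same route as the paper: coordinatewise regular elements and projecting monic equations in (1), the idempotent argument in (2), and the same generators and the same annihilator and zero-divisor checks in (3), while you also spell out why the maximal ideals have the stated form, which the paper calls standard. One slip to fix in (2): $(0,1)$ actually lies \emph{in} $M\times B$, so it is $(1,0)\notin M\times B$ that becomes a unit and, via $(1,0)(0,1)=0$, forces $(0,1)=0$ in the localization, which is exactly what your ``formally'' sentence (localizing first at $(1,0)$) correctly does.
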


\begin{proof}
An element $(a,b)\in A\times B$ is a non-zero-divisor if and only if $a$ is a
non-zero-divisor of $A$ and $b$ is a non-zero-divisor of $B$. Therefore
inverting all non-zero-divisors yields
\[
\TotQ{A\times B}\cong \TotQ{A}\times \TotQ{B}.
\]
If $A$ and $B$ are reduced, then $A\times B$ is reduced. If
$(x,y)\in \TotQ{A}\times \TotQ{B}$ is integral over $A\times B$, then
projecting any monic integral equation to the two coordinates shows that $x$
is integral over $A$ and $y$ is integral over $B$. Hence $x\in A$ and
$y\in B$, so $A\times B$ is integrally closed. This proves~(1).

The description of $\Max(A\times B)$ is standard. Fix $M\in \Max(A)$. In the
localization $(A\times B)_{M\times B}$, the idempotent $(1,0)$ becomes a
unit because $(1,0)\notin M\times B$. Since
\[
(1,0)(0,1)=0,
\]
it follows that $(0,1)=0$ in the localization. Hence every fraction is equal
to one with second coordinate zero, and the map
\[
\frac{(a,b)}{(s,t)}\longmapsto \frac{a}{s}
\]
defines an isomorphism
\[
(A\times B)_{M\times B}\cong A_M.
\]
The proof of $(A\times B)_{A\times N}\cong B_N$ is symmetric. This proves~(2).

For part~(3), note that $J=I\times B$ is generated by
\[
(a_1,0),\dots,(a_r,0),(0,1),
\]
so $J$ is finitely generated. Let $(a,b)\in J$. Then $a\in I\subseteq Z(A)$,
so there exists $0\neq x\in A$ with $xa=0$. Therefore
\[
(x,0)(a,b)=(0,0),
\]
and $(x,0)\neq (0,0)$. Thus every element of $J$ is a zero-divisor of
$A\times B$, so $J\subseteq Z(A\times B)$.

Now let $(r,s)\in A\times B$ annihilate $J$. Since $(0,1)\in J$, we get
\[
(r,s)(0,1)=(0,s)=(0,0),
\]
hence $s=0$. Also $(r,0)$ annihilates every $(a,0)$ with $a\in I$, so
$r\in \Ann_A(I)=0$. Therefore
\[
\Ann_{A\times B}(J)=0.
\]
This proves~(3).
\end{proof}

\section{The Main Construction}

We can now answer Problem~9.

\begin{theorem}\label{thm:main}
There exists an integrally closed reduced ring $R$ such that:
\begin{enumerate}
\item for every maximal ideal $\mathfrak p$ of $R$, the localization
$R_{\mathfrak p}$ is an integrally closed McCoy ring;
\item $R$ is not a McCoy ring;
\item $R$ is not locally a domain.
\end{enumerate}
In particular, Problem~9 from \emph{Open Problems in Commutative Ring Theory}
has an affirmative answer.
\end{theorem}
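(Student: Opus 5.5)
We take $R:=A\times B$, where $A$ is the ring of Proposition~\ref{prop:akiba} and $B$ is the local ring of Lemma~\ref{lem:localfactor} (over the same field $k$). The plan is to check each required property by reducing it, via Lemma~\ref{lem:product}, to the corresponding property of one of the two factors.

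\emph{$R$ is reduced and integrally closed.} By Proposition~\ref{prop:akiba}(1), $A$ is reduced and integrally closed. By Lemma~\ref{lem:localfactor}(1),(3), $B$ is reduced and integrally closed. Lemma~\ref{lem:product}(1) then gives that $R$ is reduced and integrally closed.

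\emph{Part (1).} By Lemma~\ref{lem:product}(2), every maximal ideal $\mathfrak p$ of $R$ is of one of two kinds.
\begin{itemize}
\item If $\mathfrak p=M\times B$ with $M\in\Max(A)$, then $R_{\mathfrak p}\cong A_M$. This is an integrally closed domain by Proposition~\ref{prop:akiba}(2). A domain is trivially McCoy: $Z=0$, so the only finitely generated ideal inside $Z$ is $0$, and its annihilator is the whole ring, which is nonzero.
\item If $\mathfrak p=A\times N$, then $N=\mathfrak m_B$ because $B$ is local. Hence $R_{\mathfrak p}\cong B_{\mathfrak m_B}\cong B$, since localizing a local ring at its maximal ideal changes nothing. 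This ring is integrally closed and McCoy by Lemma~\ref{lem:localfactor}(3),(4).
\end{itemize}

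\emph{Part (2).} Apply Lemma~\ref{lem:product}(3) to $I_A=(u,v)$, which by Proposition~\ref{prop:akiba}(3) satisfies $I_A\subseteq Z(A)$ and $\Ann_A(I_A)=0$. This yields the finitely generated ideal $J=I_A\times B$ with $J\subseteq Z(R)$ and $\Ann_R(J)=0$. Hence $R$ is not McCoy.

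\emph{Part (3).} At the maximal ideal $A\times\mathfrak m_B$ we have $R_{A\times\mathfrak m_B}\cong B$, and $B$ is not a domain by Lemma~\ref{lem:localfactor}(5). So $R$ is not locally a domain.

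There is no real obstacle here, since the substantive work was done in the earlier results. The only point needing care is the identification $B_{\mathfrak m_B}\cong B$ in part (1). It holds because every element outside $\mathfrak m_B$ is a unit (Lemma~\ref{lem:localfactor}(1)), so the localization map $B\to B_{\mathfrak m_B}$ is an isomorphism.
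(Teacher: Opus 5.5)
Your proposal is correct and follows the paper's proof essentially step for step: you take $R=A\times B$ and reduce each property to the factors via Lemma~\ref{lem:product}. That covers reducedness and integral closedness, the local McCoy condition, the failure of McCoy via $J=I_A\times B$, and the failure of being locally a domain at $A\times\mathfrak m_B$. Your extra remarks, that a domain is trivially McCoy and that $B_{\mathfrak m_B}\cong B$ because elements outside $\mathfrak m_B$ are units, simply make explicit two points the paper leaves implicit.
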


\begin{proof}
Let $A$ be the ring of Proposition~\ref{prop:akiba}, let $B$ be the ring of
Lemma~\ref{lem:localfactor}, and define
\[
R:=A\times B.
\]

By Proposition~\ref{prop:akiba}, the ring $A$ is reduced and integrally
closed, every localization $A_M$ at a maximal ideal is an integrally closed
domain, and the ideal
\[
I_A=(u,v)\subseteq A
\]
satisfies
\[
I_A\subseteq Z(A),
\qquad
\Ann_A(I_A)=0.
\]
By Lemma~\ref{lem:localfactor}, the ring $B$ is reduced, integrally closed,
McCoy, and not a domain. Applying Lemma~\ref{lem:product}(1), we conclude that
$R=A\times B$ is reduced and integrally closed.

Next we verify the local condition. Let $\mathfrak p\in \Max(R)$. By
Lemma~\ref{lem:product}(2), either
\[
\mathfrak p=M\times B
\qquad
\text{for some }M\in \Max(A),
\]
or
\[
\mathfrak p=A\times \mathfrak m_B,
\]
where $\mathfrak m_B$ is the maximal ideal of $B$.

In the first case,
\[
R_{\mathfrak p}\cong A_M
\]
by Lemma~\ref{lem:product}(2). Proposition~\ref{prop:akiba} shows that $A_M$
is an integrally closed domain, and every domain is McCoy. Hence
$R_{\mathfrak p}$ is an integrally closed McCoy ring.

In the second case,
\[
R_{\mathfrak p}\cong B
\]
by Lemma~\ref{lem:product}(2), and Lemma~\ref{lem:localfactor} shows that $B$
is an integrally closed McCoy ring. Therefore every maximal localization of
$R$ is an integrally closed McCoy ring.

The ring $R$ is not locally a domain, because at the maximal ideal
\[
\mathfrak p_0:=A\times \mathfrak m_B
\]
we have
\[
R_{\mathfrak p_0}\cong B,
\]
and $B$ is not a domain by Lemma~\ref{lem:localfactor}.

Finally, $R$ is not a McCoy ring. Indeed, Lemma~\ref{lem:product}(3) applied
to the ideal $I_A\subseteq A$ shows that
\[
J:=I_A\times B
\]
is a finitely generated ideal of $R$ satisfying
\[
J\subseteq Z(R)
\qquad
\text{and}
\qquad
\Ann_R(J)=0.
\]
Hence $R$ fails the defining McCoy condition.

Thus the explicit ring
\[
R=
\Bigl(R_{A,0}[u,v]\Bigr)\times
\Bigl(k+\bigoplus_{i\ge 1} t\,k[[t]]\Bigr)
\]
has all the required properties.
\end{proof}

\begin{corollary}
There exists an integrally closed reduced ring $R$ such that $R[X]$ is
integrally closed, every maximal localization $R_{\mathfrak p}$ is an
integrally closed McCoy ring, but $R$ itself is not a McCoy ring and is not
locally a domain.
\end{corollary}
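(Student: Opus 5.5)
We take the ring $R=A\times B$ built in the proof of Theorem~\ref{thm:main}. The only new claim beyond that theorem is that $R[X]$ is integrally closed, and we get it from Huckaba's observation quoted in the introduction: if $R$ is reduced and $R_M$ is an integrally closed McCoy ring for every maximal ideal $M$, then $R[X]$ is integrally closed \cite[p.~103]{Huckaba}.

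The steps, in order, are as follows.

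First, Theorem~\ref{thm:main} already gives the facts we need about $R$:
\begin{itemize}
\item $R$ is reduced and integrally closed;
\item every maximal localization $R_{\mathfrak p}$ is an integrally closed McCoy ring;
\item $R$ is not McCoy;
\item $R$ is not locally a domain.
\end{itemize}

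Second, the first two facts are exactly the hypotheses of Huckaba's criterion, so $R[X]$ is integrally closed. Combining this with the last two facts gives the corollary.

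The main obstacle is checking that the notion of "integrally closed" in Huckaba's statement matches the one used here, namely integral closure in the total quotient ring. It is also worth making sure the criterion does not need a global hypothesis beyond reducedness, since $R$ is deliberately not McCoy.

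If we want to avoid relying on the citation, we would reprove the criterion along Akiba's lines. Integral closedness of $R[X]$ in $Q(R[X])$ should be a local property: test an element $\xi$ integral over $R[X]$ via the conductor ideal $(R[X]:\xi)\cap R$, and pass to each $R_M[X]$. Each $R_M[X]$ is integrally closed, by \cite[Theorem~3.2]{Akiba} for the integrally closed reduced McCoy ring $B$, and by \cite[Corollary~1.3]{Akiba} for the domains $A_M$.

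A simpler alternative uses the product structure, since $R[X]\cong A[X]\times B[X]$. Lemma~\ref{lem:product}(1) then reduces the claim to showing that $A[X]$ and $B[X]$ are integrally closed:
\begin{itemize}
\item $A[X]$ is integrally closed by \cite[Corollary~1.3]{Akiba}, because all $A_M$ are integrally closed domains;
\item $B[X]$ is integrally closed by \cite[Theorem~3.2]{Akiba}, because $B$ is reduced, integrally closed and McCoy by Lemma~\ref{lem:localfactor}.
\end{itemize}

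We would present this product argument as the primary proof, since it needs only these two cited theorems and Lemma~\ref{lem:product}(1) applied to $A[X]$ and $B[X]$.
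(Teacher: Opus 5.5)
Your proposal is correct. Its first route, Theorem~\ref{thm:main} plus Huckaba's criterion \cite[p.~103]{Huckaba}, is exactly the paper's one-line proof.

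The product argument you promote to primary proof is a genuinely different route. You do not use the local hypothesis that every $R_{\mathfrak p}$ is an integrally closed McCoy ring. Instead you use the global splitting $R[X]\cong A[X]\times B[X]$:
\begin{itemize}
\item $A[X]$ is integrally closed by \cite[Corollary~1.3]{Akiba}, since all $A_M$ are integrally closed domains;
\item $B[X]$ is integrally closed by \cite[Theorem~3.2]{Akiba}, since $B$ is reduced, integrally closed and McCoy;
\item Lemma~\ref{lem:product}(1), applied to the reduced rings $A[X]$ and $B[X]$, then gives the conclusion.
\end{itemize}

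What your route buys is self-containment. It needs only Akiba's two results and the elementary product lemma, so your worry about the exact hypotheses and conventions in Huckaba's remark disappears.

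What the paper's route buys is that it uses precisely the hypothesis Problem~9 is about. Your argument shows that for this particular $R$, the integral closedness of $R[X]$ already follows from Akiba's global theorems applied to each factor. So this example never actually needs Huckaba's local-to-global step.

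Your sketched reproof of Huckaba's criterion via the conductor is also sound. A non-zero-divisor of $R[X]$ remains one in $R_M[X]$, so an element of $Q(R[X])$ integral over $R[X]$ lands in every $R_M[X]$. With the product argument in hand, though, that reproof is unnecessary.
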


\begin{proof}
Apply Theorem~\ref{thm:main} and Huckaba's observation
\cite[p.~103]{Huckaba}.
\end{proof}

\section{Use of AI}

The proof in this note was obtained with the assistance of Rethlas
\cite{Rethlas2604}, a natural-language automated reasoning system for
automated conjecture resolution. In the present work, Rethlas was used as a
natural-language mathematical reasoning tool for searching for a
counterexample, organizing the construction, and producing a candidate proof.

For this problem, Rethlas supplied the product-ring strategy, the use of
Akiba's Nagata-type example, and the auxiliary local integrally closed McCoy
ring that is not a domain. The proof in its entirety was then verified by the
human author.

\end{document}